\numberwithin{equation}{section}
\numberwithin{figure}{section}
\newtheorem{theorem}{Theorem}
\newtheorem{fact}[theorem]{Fact}
\newtheorem{rem}[theorem]{Remark}
\newtheorem{lem}[theorem]{Lemma}
\newtheorem{thm}[theorem]{Theorem}
\newtheorem{defi}{Definition}
\author{R. M. \L OCHOWSKI} 
\begin{document}

\title{Pathwise stochatic integration\\
 with finite variation processes\\
 uniformly approximating càdlàg processes}

\maketitle
\textbf{Abstract.} For any real-valued stochastic process $X$ with
càdlàg paths we define non-empty family of processes which have locally
finite total variation, have jumps of the same order as the process
$X$ and uniformly approximate its paths on compacts. The application
of the defined class is the definition of stochastic integral with
semimartingale integrand and integrator as a limit of pathwise Lebesgue-Stieltjes
integrals. This construction leads to the stochastic integral with
some correction term (different from the Stratonovich integral). We
compare the obtained result with classical results of Wong-Zakai and
Bichteler on pathwise stochastic integration. As a "byproduct"
we obtain an example of a series of double Skorohod maps of a standard
Brownian motion, which is not a semimartingale.

\section{Introduction}

Let $X=\left(X_{t}\right)_{t\geq0}$ be a real-valued stochastic process
with càdlàg paths and let $T\geq0.$ The total variation of the process
$X$ on the interval $\left[0;T\right]$ is defined with the following
formula 
\[
TV\left(X,T\right):=\sup_{n}\sup_{0\leq t_{0}<t_{1}<...<t_{n}\leq T}\sum_{i=1}^{n}\left|X_{t_{i}}-X_{t_{i-1}}\right|.
\]
Unfortunately, many of the most important families of stochastic processes
are characterized with a ''wild'' behavior, demonstrated by their
infinite total variation. This fact arguably caused the need of the
development of the general theory of stochastic integration. The main
idea allowing to overcome the problematic infinite total variation
and define stochastic integral with respect to a semimartingale utilizes
the fact that the quadratic variation of the semimartingale is still
finite. The similar idea may be applied when $p-$variation of the
integrator is finite for some $p\in\left(1;2\right)$. This approach
utilizes Love-Young inequality and may be used e.g. to define stochastic
integral with respect to fractional Brownian motion (cf. \cite{Kubilius:2008}).
Further developments, where Hölder continuity plays crucial role,
led to the rough paths theory developed by T. Lyons and his co-workers
(cf. \cite{Friz:2010fk}); some other generalization introduces Orlicz
norms and may be found in the recent book by Dudley and Norvaiša \cite[Chapt. 3]{DudleyNorvaisa:2011}).
The approach used in this article is somewhat different. It is similar
to the old approach of Wong and Zakai \cite{WongZakai:1965a} and
is based on the simple observation that in the neighborhood (in sup
norm) of every càdlàg function defined on compact interval $\left[0;T\right]$
one easily finds another function with finite total variation. Thus,
for every $c>0,$ the process $X$ may be decomposed as the sum 
\[
X=X^{c}+\left(X-X^{c}\right)
\]
where $X^{c}$ is a ``nice'' process with finite total variation
and the difference $X-X^{c}$ is a process with small amplitude (no
greater than $K_{T}c$) but possibly ``wild'' behaviour with infinite
total variation. More precisely, let $F$ be some fixed, right continuous
filtration such that $X$ is adapted to $F.$ Now, for every $c>0$
we introduce (non-empty, as it will be shown in the sequel) family
$\mathcal{X}^{c}$ of processes with càdlàg paths, satisfying the
following conditions. If $X^{c}\in\mathcal{X}^{c}$ then 
\begin{enumerate}
\item the process $X^{c}$ has locally finite total variation; 
\item $X^{c}$ has càdlàg paths; 
\item for every $T\geq0$ there exists such $K_{T}<+\infty$ that for every
$t\in\left[0;T\right],$ $\left|X_{t}-X_{t}^{c}\right|\leq K_{T}c;$ 
\item for every $T\geq0$ there exists such $L_{T}<+\infty$ that for every
$t\in\left[0;T\right],$ $\left|\Delta X_{t}^{c}\right|\leq L_{T}\left|\Delta X_{t}\right|;$ 
\item the process $X^{c}$ is adapted to the filtration $F.$ 
\end{enumerate}
We will prove that if processes $X$ and $Y$ are càdlàg semimartingales
on a filtered probability space $\left(\Omega,\left(\mathcal{F}_{t}\right)_{t\geq0},\mathbb{P},F\right),$
with a probability measure $\mathbb{P},$ such that usual hypotheses
hold (cf. \cite[Sect. 1.1]{Protter:2004}), then the sequence of pathwise
Lebesgue-Stieltjes integrals 
\begin{equation}
\int_{0}^{T}Y_{-}\mathrm{d}X^{c},\mbox{ }c>0,\label{eq:loch}
\end{equation}
with $X^{c}\in{\mathcal{X}}^{c}$, tends uniformly in probability
$\mathbb{P}$ on compacts to $\int_{0}^{T}Y_{-}\mathrm{d}X+\left[X^{cont},Y^{cont}\right]_{T}$ as $c \downarrow 0$;
$\int_{0}^{T}Y_{-}\mathrm{d}X$ denotes here the (semimartingale)
stochastic integral and $X^{cont}$ and $Y^{cont}$ denote continuous
parts of $X$ and $Y$ respectively. Moreover, for any square summable
sequence $\left(c\left(n\right)\right)_{n\geq1}$ we get a.s., uniform
on compacts, convergence of the sequence $\int_{0}^{T}Y_{-}\mathrm{d}X^{c\left(n\right)},n=1,2,...$
(cf. Theorem \ref{thm:leb_stieltjes_stoch-1}).

We shall stress here that for each $c>0$ and each pair of càdlàg
paths $\left(X\left(\omega\right),Y\left(\omega\right)\right),\omega\in\Omega,$
the value of $\int_{0}^{T}Y_{-}\left(\omega\right)\mathrm{d}X^{c}\left(\omega\right)$
(and thus the limit, if it exists) is independent of the probability
measure $\mathbb{P}$. Thus we obtain a result in the spirit of Wong
and Zakai \cite{WongZakai:1965a}, Bichtelier (see \cite{Bichteler:1981}
or \cite{Karandikar:1995}) or the recent result of Nutz \cite{Nutz:2011},
where operations almost surely leading to the stochastic integral,
independent of probability measures and filtrations, are considered.
The old approach of Wong and Zakai is very straightforward, since
it just replaces stochastic integral with Lebesgue-Stieltjes integral.
However, it deals with very limited family of possible integrands
and integrators (diffusions driven by a Brownian motion), $x_t = \int_0^t g(s)\mathrm{d}s +  \int_{0}^{t}f(s)\mathrm{d}B_{s},$
and using appropriate continuous, finite variation approximation of
$x,$ $x^{n},$ one gets a.s. in the limit the Stratonovich integral
\[
\lim_{n\rightarrow\infty}\int_{0}^{T}\psi\left(x_{t}^{n},t\right)\mathrm{d}x_{t}^{n}=\int_{0}^{T}\psi\left(x_{t},t\right)\mathrm{d}x_{t}+\frac{1}{2}\int_{0}^{T}f^2(t)\frac{\partial\psi}{\partial x}\left(x_{t},t\right)\mathrm{d}t.
\]
(Modification of this approach is possible, cf. \cite{JMP1989}, but
it requires introduction of a probability measure on the Skorohod
space and rather strong UT (uniform tightness) condition, which is
sometimes difficult to verify in practice, and is not satisfied e.g.
by the picewise linear approximation of the Wiener process. Moreover,
the obtained convergence holds in distribution.)

Bichteler's remarkable approach allows to integrate any adapted càdlàg
process $Y$ with semimartingale integrator $X,$ and is based on
the approximation 
\begin{equation}
\lim_{n\rightarrow\infty}\sup_{0\leq t\leq T}\left|Y_{0}X_{0}+\sum_{i=1}^{\infty}Y_{\tau_{i-1}^{n}\wedge t}\left(X_{\tau_{i}^{n}\wedge t}-X_{\tau_{i-1}^{n}\wedge t}\right)-\int_{0}^{t}Y_{-}\mathrm{d}X\right|=0\mbox{ a.s.},\label{eq:bicht}
\end{equation}
where $\tau^{n}=\left(\tau_{i}^{n}\right),$ $i=0,1,2,\ldots,$ is
the following sequence of stopping times: $\tau_{0}^{n}=0$ and for
$i=1,2,\ldots,$ 
\[
\tau_{i}^{n}=\inf\left\{ t>\tau_{i-1}^{n}:\left|Y_{t}-Y_{\tau_{i-1}^{n}}\right|\geq2^{-n}\right\} .
\]
\begin{rem} Following the proof of \cite[Theorem 2]{Karandikar:1995}
it is easy to see that Bichteler's construction works for any sequence
$\tau^{n}=\left(\tau_{i}^{n}\right),$ $i=0,1,2,\ldots,$ of stopping
times, such that $\tau_{0}^{n}=0$ and 
\[
\tau_{i}^{n}=\inf\left\{ t>\tau_{i-1}^{n}:\left|Y_{t}-Y_{\tau_{i-1}^{n}}\right|\geq c\left(n\right)\right\} ,
\]
for $i=1,2,\ldots,$ given $c(n)>0,$ $\sum_{n=1}^{\infty}c^{2}\left(n\right)<+\infty.$\end{rem}

The new result of Nutz goes even further, since it does not assume
càdlàg property of the integrand, but to prove his result one needs
the existence of Mokobodzki's medial limits (cf. \cite{Meyer:1973}),
which one can not prove under standard Zermelo\textendash{}Fraenkel
set theory with the axiom of choice.

The results of this paper seem to indicate that Bichteler's approach
is the most flexible (under standard Zermelo\textendash{}Fraenkel
set theory with the axiom of choice) since we will prove that even
in the case when the integrator is a standard Brownian motion, our
construction (\ref{eq:loch}) can not be extended to arbitrary adapted, 
bounded by a constant, continuous integrand $Y$. Moreover, similar to the Wong
- Zakai, but a more general construction, 
\[
\int_{0}^{T}Z_{-}^{c}\mathrm{d}X^{c},\mbox{ }c>0,
\]
can not be extended to arbitrary continuous semimartingale integrand
$Z$ and semimartingale integrator $X.$ The construction of the appropriate
$Y$ and $Z,$ adapted to the natural filtration of $B$ and leading
to divergent series of integrals $\int_{0}^{T}Y\mathrm{d}B^{\gamma\left(n\right)},$
$\int_{0}^{T}Z^{\delta\left(n\right)}\mathrm{d}\tilde{B}^{\delta\left(n\right)},$
with $B^{\gamma\left(n\right)},$ $Z^{\delta\left(n\right)},$ $\tilde{B}^{\delta\left(n\right)}$
satisfying conditions (1)-(5) for some semimartingales $Z,$ $\tilde{B}$,
with $\gamma\left(n\right),$ $\delta\left(n\right)\downarrow0$ as
$n\uparrow+\infty,$ will utilise the recent findings of Bednorz, \L{}ochowski
and Mi\l{}o\'{s} on truncated variation (see \cite{BL}, \cite{LM2011}) and its
relation with the double Skorohod map on $[-c;c]$ (cf. \cite{BKR}).

Let us shortly comment on the organization of the paper. In the next
section we prove, for any $c>0,$ the existence of non-empty family
of processes ${\mathcal{X}}^{c}$. In the third section we deal with
the limit of pathwise, Lebesgue-Stieltjes integrals $\int_{0}^{T}Y_{-}\mathrm{d}X^{c}$
as $c\downarrow0.$ The fourth section is devoted to the construction
of counterexamples. Last section - Appendix - summarizes the necessary
facts on the relation between the truncated variation and double Skorohod
map on $[-c;c].$

\textsl{Acknowledgements.} The author would like to thank Dr. Alexander
Cox for pointing out to him the results of \cite{Nutz:2011}. Research
of the author was supported by the African Institute for Mathematical
Sciences in Muizenberg, South Africa, and by the National Science
Centre in Poland under decision no. UMO-2011/01/B/ST1/05089.

\section{Existence of the sequence $\left(X^{c}\right)_{c>0}$\label{sec:Existence}}

In this section we will prove that for every $c>0$ the family of
processes ${\mathcal{X}}^{c},$ satisfying the conditions (1)-(5)
of Section 1 is non-empty. For given $c>0$ we will simply construct
a process $X^{c}$ satisfying all these conditions. We start with
few definitions.

For fixed $c>0$ we define two stopping times 
\begin{gather*}
T_{u}^{2c}X=\inf\left\{ s\geq0:\mbox{ }\sup_{t\in\left[0;s\right]}X_{t}-X_{0}>c\right\} ,\\
T_{d}^{2c}X=\inf\left\{ s\geq0:\mbox{ }X_{0}-\inf_{t\in\left[0;s\right]}X_{t}>c\right\} .
\end{gather*}
Assume that $T_{d}^{2c}X\geq T_{u}^{2c}X,$ i.e. the first upward
jump of the process $X$ from $X_{0}$ of size $c$ appears before
the first downward jump of the same size $c$ or both times are infinite
(there is no upward or downward jump of size $c$). Note that in the
case $T_{d}^{2c}X<T_{u}^{2c}X$ we may simply consider the process
$-X.$ Now we define sequences $\left(T_{d,k}^{2c}\right)_{k=1}^{\infty},\mbox{ }\left(T_{u,k}^{2c}\right)_{k=1}^{\infty}$
in the following way: $T_{u,0}^{2c}=T_{u}^{2c}X$ and for $k=0,1,2,...$
\begin{gather*}
T_{d,k}^{2c}=\left\{ \begin{array}{lr}
\inf\left\{ s\geq T_{u,k}^{2c}:\sup_{t\in\left[T_{u,k}^{2c};s\right]}X_{t}-X_{s}>2c\right\}  & \text{ if }T_{u,k}^{2c}<+\infty,\\
+\infty & \mbox{otherwise},
\end{array}\right.\\
T_{u,k+1}^{2c}=\left\{ \begin{array}{lr}
\inf\left\{ s\geq T_{d,k}^{2c}:X_{s}-\inf_{t\in\left[T_{d,k}^{2c};s\right]}X_{t}>2c\right\}  & \text{ if }T_{d,k}^{2c}<+\infty,\\
+\infty & \mbox{otherwise}.
\end{array}\right.
\end{gather*}
\begin{rem} \label{finK} Note that for any $s>0$ there exists such
$K<\infty$ that $T_{u,K}^{2c}>s$ or $T_{d,K}^{2c}>s.$ Otherwise
we would obtain two infinite sequences $\left(s_{k}\right)_{k=1}^{\infty},\left(S_{k}\right)_{k=1}^{\infty}$
such that $0\leq s\left(1\right)<S\left(1\right)<s\left(2\right)<S\left(2\right)<...\leq s$
and $X_{S\left(k\right)}-X_{s\left(k\right)}\geq c.$ But this is
a contradiction since $X$ is a càdlàg process and for any sequence
such that $0\leq s\left(1\right)<S\left(1\right)<s\left(2\right)<S\left(2\right)<...\leq s$
sequences $\left(X_{S\left(k\right)}\right)_{k=1}^{\infty},\left(X_{s\left(k\right)}\right)_{k=1}^{\infty}$
have a common limit. \end{rem}

Now we define, for the given process $X,$ the process $X^{c}$ with
the formulas 
\begin{equation}
X_{s}^{c}=\left\{ \begin{array}{lr}
X_{0} & \text{ if }s\in\left[0;T_{u,0}^{2c}\right);\\
\sup_{t\in\left[T_{u,k}^{2c};s\right]}X_{t}-c & \text{ if }s\in\left[T_{u,k}^{2c};T_{d,k}^{2c}\right),k=0,1,2,...;\\
\inf_{t\in\left[T_{d,k}^{2c};s\right]}X_{t}+c & \text{ if }s\in\left[T_{d,k}^{2c};T_{u,k+1}^{2c}\right),k=0,1,2,....
\end{array}\right.\label{eq:defXc}
\end{equation}

\begin{rem} Note that due to Remark \ref{finK}, $s$ belongs to
one of the intervals $\left[0;T_{u,0}^{2c}\right),\left[T_{u,k}^{2c};T_{d,k}^{2c}\right)$
or $\left[T_{d,k}^{2c};T_{u,k+1}^{c}\right)$ for some $k=0,1,2,...$
and the process $X_{s}^{c}$ is defined for every $s\geq0.$ \end{rem}

Now we are to prove that $X^{c}$ satisfies conditions (1)-(5).

\begin{proof}

(1) The process $X^{c}$ has finite total on compact intervals, since
it is monotonic on intervals of the form $\left[T_{u,k}^{2c};T_{d,k}^{2c}\right),$
$\left[T_{d,k}^{2c};T_{u,k+1}^{c}\right)$ which sum up to the whole
half-line $\left[0;+\infty\right).$

(2) From formula (\ref{eq:defXc}) it follows that $X^{c}$ is also
càdlàg.

(3) In order to prove condition (3) we consider 3 possibilities. 
\begin{itemize}
\item $s\in\left[0;T_{u,0}^{2c}\right).$ In this case, since $0\leq s<T_{u}^{2c}X\leq T_{d}^{2c}X,$
by definition of $T_{u}^{2c}X$ and $T_{d}^{2c}X,$ 
\[
X_{s}-X_{s}^{c}=X_{s}-X_{0}\in\left[-c;c\right].
\]

\item $s\in\left[T_{u,k}^{2c};T_{d,k}^{2c}\right),$ for some $k=0,1,2,...$
In this case, by definition of $T_{d,k}^{2c}$, $\sup_{t\in\left[T_{u,k}^{2c};s\right]}X_{t}-X_{s}$
belongs to the interval $\left[0;2c\right],$ hence 
\[
X_{s}-X_{s}^{c}=X_{s}-\sup_{t\in\left[T_{u,k}^{2c};s\right]}X_{t}+c\in\left[-c;c\right].
\]

\item $s\in\left[T_{d,k}^{2c};T_{u,k+1}^{2c}\right)$ for some $k=0,1,2,...$
In this case $X_{s}-\inf_{t\in\left[T_{d,k}^{2c};s\right]}X_{t}$
belongs to the interval $\left[0;2c\right],$ hence 
\[
X_{s}-X_{s}^{c}=X_{s}-\inf_{t\in\left[T_{d,k}^{2c};s\right]}X_{t}-c\in\left[-c;c\right].
\]

\end{itemize}
(4) We will prove stronger fact than (4), namely that for every $s>0,$
\begin{align}
\left|\Delta X_{s}^{c}\right| & \leq\left|\Delta X_{s}\right|.\label{eq:jumps}
\end{align}
Indeed, from formula (\ref{eq:defXc}) it follows that for any $s\notin\left\{ T_{u,k}^{2c};T_{d,k}^{2c}\right\} ,$
(\ref{eq:jumps}) holds, hence let us assume that $s\in\left\{ T_{u,k}^{2c};T_{d,k}^{2c}\right\} .$
We consider several possibilities. If $s=T_{u,0}^{2c}$ then, by the
definition of $T_{u,0}^{2c},$ 
\[
X_{s}^{c}-X_{s-}^{c}=X_{s}-c-X_{0}\mbox{ \ensuremath{\geq}0 and }X_{s}^{c}-X_{s-}^{c}=X_{s}-X_{0}-c\leq X_{s}-X_{s-}.
\]
If $s=T_{u,k}^{2c},k=1,2,...,$ then, by the definition of $T_{u,k}^{2c},$
\[
X_{s}^{c}-X_{s-}^{c}=X_{s}-c-\left(\inf_{t\in\left[T_{d,k-1}^{2c};s\right]}X_{t}+c\right)\mbox{ =\ensuremath{X_{s}-\inf_{t\in\left[T_{d,k-1}^{2c};s\right]}X_{t}-2c\geq0}}
\]
and, on the other hand, 
\[
X_{s}^{c}-X_{s-}^{c}\mbox{ =\ensuremath{X_{s}-\inf_{t\in\left[T_{d,k-1}^{2c};s\right]}X_{t}-2c\leq X_{s}-X_{s-}.}}
\]
Similar arguments may be applied for $s=T_{d,k}^{2c},k=0,1,....$

(5) The process $X^{c}$ is adapted to the filtration $F$ since it
is adapted to any right continuous filtration containing the natural
filtration of the process $X$.

\end{proof}

\begin{rem} \label{Skoro} It is possible to define the process $X^{c}$
in many different ways. For example, defining 
\[
X^{c}=X_{0}+UTV^{c}\left(X,\cdot\right)-DTV^{c}\left(X,\cdot\right)
\]
we obtain a process satisfying all conditions (1)-(5) and having (on
the intervals of the form $\left[0;T\right],\mbox{ }T>0$) the smallest
possible total variation among all processes, increments of which
differ from the increments of the process $X$ by no more than $c.$
$UTV^{c}\left(X,\cdot\right)\mbox{ and }DTV^{c}\left(X,\cdot\right)$
denote here upward and downward truncated variation processes, defined
as 
\begin{eqnarray*}
UTV^{c}\left(X,t\right) & := & \sup_{n}\sup_{0\leq t_{1}<t_{2}<...<t_{n}\leq t}\sum_{i=1}^{n}\max\left\{ X_{t_{i}}-X_{t_{i-1}}-c,0\right\} ,\\
DTV^{c}\left(X,t\right) & := & \sup_{n}\sup_{0\leq t_{1}<t_{2}<...<t_{n}\leq t}\sum_{i=1}^{n}\max\left\{ X_{t_{i-1}}-X_{t_{i}}-c,0\right\} .
\end{eqnarray*}
Moreover, for any $T>0$ we have 
\begin{eqnarray*}
TV\left(X^{c};T\right) & = & UTV^{c}\left(X,T\right)+DTV^{c}\left(X,T\right)\\
 & = & \sup_{n}\sup_{0\leq t_{1}<t_{2}<...<t_{n}\leq T}\sum_{i=1}^{n}\max\left\{ \left|X_{t_{i}}-X_{t_{i-1}}\right|-c,0\right\} =:TV^{c}\left(X;T\right).
\end{eqnarray*}
We will call $TV^{c}$ \emph{truncated variation}. For more on truncated
variation, upward truncated variation and downward truncated variation
see \cite{L2012QM} or \cite{LM2011}.

Some other construction may be done with the Skorohod map on $\left[-\alpha^{c};\beta^{c}\right]$
(cf. \cite{BKR}) where $\alpha^{c},\beta^{c}:\left[0;+\infty\right)\rightarrow\left(0;+\infty\right)$
are (possibly time-dependent) continuous boundaries such that $\sup_{0\leq t\leq T}\alpha^{c}\left(t\right)\leq K_{T}c,$
$\sup_{0\leq t\leq T}\beta^{c}\left(t\right)\leq K_{T}c$ and $\inf_{0\leq t}\left(\beta^{c}\left(t\right)+\alpha^{c}\left(t\right)\right)>0.$
The Skorohod map on $\left[-\alpha^{c};\beta^{c}\right]$ allows to
construct such a locally finite variation càdlàg process $-X^{c}$
that 
\[
X+\left(-X^{c}\right)\in\left[-\alpha^{c};\beta^{c}\right].
\]
From further properties of this map it follows that $X^{c}$ satisfies
all conditions (1)-(5). In fact, construction (\ref{eq:defXc}) of
$X^{c}$ is based on a Skorohod map on the interval $\left[-c;c\right].$
In the Appendix we will prove this as well as other interesting properties
of this map. \end{rem}

\section{Pathwise Lebesgue-Stieltjes integration with respect to the process
$X^{c}$}

Let us now consider a measurable space $\left(\Omega,{\mathcal{F}}\right)$
equipped with a right-continuous filtration $F$ and two processes
$X$ and $Y$ with càdlàg paths, adapted to $F.$ For $T>0$ and for
a sequence of processes $\left(X^{c}\right)_{c>0}$ with $X^{c}\in{\mathcal{X}}^{c}$
let us consider the sequence 
\begin{equation}
\int_{0}^{T}Y_{-}\mathrm{d}X^{c}.\label{eq:int_c}
\end{equation}
The integral in (\ref{eq:int_c}) is understood in the pathwise, Lebesgue-Stieltjes
sense (recall that for any $c>0,$ $X^{c}$ has bounded variation).
We have

\begin{thm} \label{thm:leb_stieltjes_stoch} Assume that $\mathbb{P}$
is a probability measure on $\left(\Omega,{\mathcal{F}}\right)$ such
that $X$ and $Y$ are semimartingales with respect to this measure
and filtration $F,$ which is complete under $\mathbb{P}$, then 
\[
\int_{0}^{T}Y_{-}\mathrm{d}X^{c}\rightarrow^{ucp\mathbb{P}}\int_{0}^{T}Y_{-}\mathrm{d}X+\left[X^{cont},Y^{cont}\right]_{T}\,\mbox{as }c\downarrow0,
\]
where ``$\rightarrow^{ucp\mathbb{P}}$'' denotes uniform convergence
on compacts in probability $\mathbb{P}$ and $\left[X^{cont},Y^{cont}\right]_{T}$
denotes quadratic covariation of continuous parts $X^{cont},Y^{cont}$
of $X$ and $Y$ respectively.\end{thm}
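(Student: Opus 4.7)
My plan is to apply the integration-by-parts formula for semimartingales separately to the pairs $(X^c,Y)$ and $(X,Y)$ and then compare. Since $X^c$ has locally finite variation it is trivially a semimartingale, and its pathwise Lebesgue--Stieltjes integral against $Y_-$ coincides with the semimartingale stochastic integral. Subtracting the two integration-by-parts identities yields
\begin{align*}
\int_0^T Y_-\mathrm{d}X^c-\int_0^T Y_-\mathrm{d}X
&=(X_T^c-X_T)Y_T-(X_0^c-X_0)Y_0\\
&\quad-\int_0^T(X^c_--X_-)\mathrm{d}Y-\bigl([X^c,Y]_T-[X,Y]_T\bigr).
\end{align*}
Because $X^c$ has finite variation, $[X^c,Y]_T=\sum_{0<s\leq T}\Delta X^c_s\,\Delta Y_s$, whereas $[X,Y]_T=[X^{cont},Y^{cont}]_T+\sum_{0<s\leq T}\Delta X_s\,\Delta Y_s$; after rearrangement, the quantity
$$\int_0^T Y_-\mathrm{d}X^c-\int_0^T Y_-\mathrm{d}X-[X^{cont},Y^{cont}]_T$$
decomposes as a sum of two boundary terms, a stochastic integral, and a jump sum, and it suffices to show that each of these four summands converges to zero ucp as $c\downarrow0$.

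The boundary terms $(X^c_t-X_t)Y_t$ and $(X^c_0-X_0)Y_0$ will be disposed of immediately via condition~(3), since $\sup_{t\leq T}|X^c_t-X_t|\leq K_Tc\downarrow0$ while $\sup_{t\leq T}|Y_t|$ and $|Y_0|$ are a.s.\ finite. For the stochastic integral $\int_0^t(X^c_--X_-)\mathrm{d}Y$ I will exploit that the predictable integrand is uniformly bounded by $K_Tc$ on $[0,T]$ and tends to $0$ uniformly, so the dominated convergence theorem for stochastic integrals (e.g.\ \cite[Ch.~IV, Thm.~32]{Protter:2004}) delivers ucp convergence to zero.

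The jump sum $\sum_{0<s\leq t}(\Delta X_s-\Delta X^c_s)\Delta Y_s$ will be handled by ordinary dominated convergence on the countable set of jump times. Applying condition~(3) at both $s$ and $s-$ gives $|\Delta X^c_s-\Delta X_s|\leq 2K_Tc$, so each summand tends to $0$; condition~(4) combined with the Cauchy--Schwarz inequality $\sum_{s\leq T}|\Delta X_s\Delta Y_s|\leq([X,X]_T[Y,Y]_T)^{1/2}<\infty$ a.s.\ produces the $c$-independent majorant $|(\Delta X_s-\Delta X^c_s)\Delta Y_s|\leq(1+L_T)|\Delta X_s\Delta Y_s|$. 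Dominated convergence then yields a.s.\ decay of the absolute-value partial-sum process uniformly in $t\in[0,T]$, which dominates the supremum in question.

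The step I expect to be the main obstacle is the stochastic-integral term, because uniform boundedness and pointwise decay of the integrand alone do not control an It\^o integral: one has to invoke a genuine stochastic dominated-convergence result, and this is precisely the place where the semimartingale hypothesis on $Y$ must be used. Once that tool is in hand, the remaining estimates are essentially bookkeeping on top of the integration-by-parts identity.
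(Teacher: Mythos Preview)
Your proposal is correct and follows essentially the same approach as the paper: both arguments rest on the semimartingale integration-by-parts identity, the dominated convergence theorem for stochastic integrals applied to $\int_0^t(X^c_--X_-)\,\mathrm{d}Y$, and ordinary dominated convergence for the jump sum using condition~(4). The only cosmetic difference is that the paper applies integration by parts once for $(X^c,Y)$, passes to the limit, and then identifies the limit via a second integration by parts for $(X,Y)$, whereas you subtract the two identities at the outset; your organization is in fact exactly the one the paper uses in the proof of the subsequent, stronger Theorem~\ref{thm:leb_stieltjes_stoch-1}.
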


\begin{proof} Fixing $c>0$ and using integration by parts formula
(cf. \cite[formula (1), page 519]{Kallenberg:2002}) we get 
\[
Y_{T}X_{T}^{c}-Y_{0}X_{0}^{c}=\int_{0}^{T}Y_{t-}\mathrm{d}X_{t}^{c}+\int_{0}^{T}X_{t-}^{c}\mathrm{d}Y_{t}+\left[Y,X^{c}\right]_{T}
\]
(the above equality and subsequent equalities in the proof hold $\mathbb{P}$
a.s.). By the uniform convergence, $X_{t}^{c}\rightrightarrows X_{t}$
as $c\downarrow0$ (note that the bound $\left|X^{c}\right|\leq\left|X\right|+K_{T}c$
and a.s. pointwise convergence $X_{t}^{c}\rightarrow X_{t}$ as $c\downarrow0$
are sufficient) we get 
\[
\int_{0}^{T}X_{t-}^{c}\mathrm{d}Y_{t}\rightarrow^{ucp\mathbb{P}}\int_{0}^{T}X_{t-}\mathrm{d}Y_{t}.
\]
Since $X^{c}$ has locally finite variation, we have (cf. \cite[Theorem 26.6 (viii)]{Kallenberg:2002}),
\[
\left[Y,X^{c}\right]_{T}=\sum_{0<s\leq T}\Delta Y_{s}\Delta X_{s}^{c}.
\]
We calculate the (pathwise) limit 
\[
\lim_{c\downarrow0}\left[Y,X^{c}\right]_{T}=\lim_{c\downarrow0}\sum_{0<s\leq T}\Delta Y_{s}\Delta X_{s}^{c}=\sum_{0<s\leq T}\Delta Y_{s}\Delta X_{s}
\]
(notice that for any $0\leq s\leq T,$ $\left|\Delta X_{s}^{c}\right|\leq L_{T}\left|\Delta X_{s}\right|,$
thus the above sum is convergent by dominated convergence) and finally
obtain 
\begin{eqnarray}
\int_{0}^{T}Y_{t-}\mathrm{d}X_{t}^{c} & = & \left\{ Y_{T}X_{T}^{c}-Y_{0}X_{0}^{c}-\int_{0}^{T}X_{t-}^{c}\mathrm{d}Y_{t}-\left[Y,X^{c}\right]_{T}\right\} \nonumber \\
 & \rightarrow^{ucp\mathbb{P}} & Y_{T}X_{T}-Y_{0}X_{0}-\int_{0}^{T}X_{t-}\mathrm{d}Y_{t}-\sum_{0<s\leq T}\Delta Y_{s}\Delta X_{s}\mbox{ as }\ensuremath{c\downarrow0.}\label{eq:lim_int_part}
\end{eqnarray}
On the other hand, again by the integration by parts formula, we obtain
\begin{equation}
\int_{0}^{T}X_{t-}\mathrm{d}Y_{t}=Y_{T}X_{T}-Y_{0}X_{0}-\int_{0}^{T}Y_{t-}\mathrm{d}X_{t}-\left[Y,X\right]_{T}.\label{eq:int_part2}
\end{equation}
Finally, comparing (\ref{eq:lim_int_part}) and (\ref{eq:int_part2}),
and using \cite[Corollary 26.15]{Kallenberg:2002}, we obtain 
\begin{eqnarray*}
\int_{0}^{T}Y_{t-}\mathrm{d}X_{t}^{c} & \rightarrow^{ucp\mathbb{P}} & \int_{0}^{T}Y_{t-}\mathrm{d}X_{t}+\left[Y,X\right]_{T}-\sum_{0<s\leq T}\Delta Y_{s}\Delta X_{s}\mbox{ \mbox{ as }\ensuremath{c\downarrow0}}\\
 & = & \int_{0}^{T}Y_{t-}\mathrm{d}X_{t}+\left[X^{cont},Y^{cont}\right]_{T}.
\end{eqnarray*}
\end{proof}

Note that to prove Theorem \ref{thm:leb_stieltjes_stoch} we did not
need the pathwise uniform convergence of the processes $X^{c}$ to
the process $X;$ we might simply use local boundedness and a.s. pointwise
convergence $X_{t}^{c}\rightarrow X_{t}$ as $c\downarrow0.$ Using
the pathwise uniform convergence of the sequence $\left(X^{c}\right)_{c>0}$
we are able to prove a bit stronger result. We have

\begin{thm} \label{thm:leb_stieltjes_stoch-1} Assume that $\mathbb{P}$
is a probability measure on $\left(\Omega,{\mathcal{F}}\right)$ such
that $X$ and $Y$ are semimartingales with respect to this measure
and filtration $F,$ which is complete under $\mathbb{P},$ then for
any $T>0$ and any sequence $\left(c\left(n\right)\right)_{n\geq1}$
such that $c(n)>0,$ $\sum_{n=1}^{\infty}c\left(n\right)^{2}<+\infty$ we have
\[
\lim_{n\rightarrow+\infty}\sup_{0\leq t\leq T}\left|\int_{0}^{t}Y_{-}\mathrm{d}X^{c\left(n\right)}-\int_{0}^{t}Y_{-}\mathrm{d}X-\left[X^{cont},Y^{cont}\right]_{t}\right|=0\mbox{ }\mathbb{P}\mbox{ a.s.}
\]

\end{thm}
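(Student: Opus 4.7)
The plan is to follow the structure of the proof of Theorem \ref{thm:leb_stieltjes_stoch}, upgrading each ucp convergence there to a.s.\ uniform convergence on $[0,T]$ by combining an $L^{2}$ maximal inequality with the Borel--Cantelli lemma, exploiting the square-summability hypothesis $\sum_{n}c(n)^{2}<\infty$. Subtracting the integration by parts identity applied to $(Y,X^{c(n)})$ from the one applied to $(Y,X)$, and using $[Y,X^{c(n)}]_{t}=\sum_{s\le t}\Delta Y_{s}\Delta X^{c(n)}_{s}$ (since $X^{c(n)}$ has finite variation) together with $[Y,X]_{t}=[X^{cont},Y^{cont}]_{t}+\sum_{s\le t}\Delta Y_{s}\Delta X_{s}$, one obtains the $\mathbb{P}$-a.s.\ pathwise identity
\begin{eqnarray*}
\int_{0}^{t}Y_{-}\mathrm{d}X^{c(n)}-\int_{0}^{t}Y_{-}\mathrm{d}X-[X^{cont},Y^{cont}]_{t} & = & Y_{t}\left(X^{c(n)}_{t}-X_{t}\right)-Y_{0}\left(X^{c(n)}_{0}-X_{0}\right)\\
 &  & -\int_{0}^{t}\left(X^{c(n)}_{-}-X_{-}\right)\mathrm{d}Y+\sum_{s\le t}\Delta Y_{s}\left(\Delta X_{s}-\Delta X^{c(n)}_{s}\right),
\end{eqnarray*}
so it suffices to show that each of the four right-hand terms tends to $0$ uniformly in $t\in[0,T]$ a.s. The first two are immediate from property (3): they are dominated respectively by $K_{T}c(n)\sup_{t\le T}|Y_{t}|$ and $K_{T}c(n)|Y_{0}|$, which tend to $0$ since $Y$ has locally bounded paths.

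For the jumps term, pathwise uniform convergence $X^{c(n)}\to X$ on $[0,T]$ (ensured by property (3)) forces $\Delta X^{c(n)}_{s}\to\Delta X_{s}$ for every $s$, because left limits are uniformly approximated as well. Property (4) then provides the bound $|\Delta Y_{s}(\Delta X_{s}-\Delta X^{c(n)}_{s})|\le(L_{T}+1)|\Delta Y_{s}\Delta X_{s}|$, and this dominating series is a.s.\ summable by Cauchy--Schwarz together with the finiteness of $[X]_{T}$ and $[Y]_{T}$. Dominated convergence then yields
\[
\sup_{t\le T}\left|\sum_{s\le t}\Delta Y_{s}\left(\Delta X_{s}-\Delta X^{c(n)}_{s}\right)\right|\le\sum_{s\le T}|\Delta Y_{s}|\,|\Delta X_{s}-\Delta X^{c(n)}_{s}|\longrightarrow 0\quad\text{a.s.}
\]

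The principal obstacle is the stochastic integral $\int_{0}^{t}H^{(n)}_{-}\mathrm{d}Y$, where $H^{(n)}:=X^{c(n)}-X$ satisfies $\sup_{t\le T}|H^{(n)}_{t}|\le K_{T}c(n)$. I would decompose $Y=Y_{0}+M+A$ with $M$ a local martingale and $A$ a finite variation process (both right-continuous with left limits), and introduce a localizing sequence of stopping times $\tau_{K}\uparrow\infty$ such that $M^{\tau_{K}}$ is a square integrable martingale with $\mathbb{E}[[M]_{\tau_{K}}]<\infty$ and $TV(A,\tau_{K})$ is bounded by a constant. The $A$-contribution is dominated pathwise by $K_{T}c(n)\,TV(A,T\wedge\tau_{K})\to 0$ a.s., while Doob's $L^{2}$ maximal inequality yields
\[
\mathbb{E}\left[\sup_{t\le T\wedge\tau_{K}}\left|\int_{0}^{t}H^{(n)}_{-}\mathrm{d}M\right|^{2}\right]\le 4K_{T}^{2}c(n)^{2}\,\mathbb{E}[[M]_{T\wedge\tau_{K}}].
\]
Markov's inequality combined with $\sum_{n}c(n)^{2}<\infty$ then allows the Borel--Cantelli lemma to deliver a.s.\ uniform convergence of the martingale contribution to $0$ on $[0,T\wedge\tau_{K}]$, and sending $K\to\infty$ removes the localization. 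This Borel--Cantelli step is precisely where the square-summability hypothesis enters essentially; without it one recovers only the ucp conclusion of Theorem \ref{thm:leb_stieltjes_stoch}.
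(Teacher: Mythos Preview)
Your proposal is correct and follows essentially the same route as the paper: the same integration-by-parts identity, the same trivial handling of the boundary terms via property (3), and the same decomposition $Y=M+A$ with a localized $L^{2}$ maximal/Burkholder bound on the martingale part together with the pathwise estimate $K_{T}c(n)\,TV(A,T)$ for the finite-variation part, with square-summability of $c(n)$ driving a.s.\ convergence. The only cosmetic differences are that the paper extracts a.s.\ convergence by summing the $L^{2}$ bounds (finite expectation of $\sum_{n}\sup_{t}|\cdot|^{2}$) rather than via Borel--Cantelli, and that for the jump sum it uses the quantitative bound $|\Delta(X^{c}_{s}-X_{s})|\le(2K_{T}+L_{T}+1)\min\{c,|\Delta X_{s}|\}$ and Cauchy--Schwarz, whereas you invoke dominated convergence with the majorant $(L_{T}+1)|\Delta Y_{s}\Delta X_{s}|$; both arguments are valid.
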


\begin{proof} Using integration by parts formula and the inequality
$\left|X^{c}-X\right|\leq K_{T}c,$ we estimate 
\begin{eqnarray*}
\lefteqn{\left|\int_{0}^{t}Y_{-}\mathrm{d}X^{c}-\int_{0}^{t}Y_{-}\mathrm{d}X-\left[X^{cont},Y^{cont}\right]_{t}\right|}\\
 & = & \left|Y_{t}\left(X_{t}^{c}-X_{t}\right)-Y_{0}\left(X_{0}^{c}-X_{0}\right)-\sum_{0<s\leq t}\Delta Y_{s}\Delta\left(X_{s}^{c}-X_{s}\right)-\int_{0}^{t}\left(X_{-}^{c}-X\right)\mathrm{d}Y\right|\\
 & \leq & K_{T}c\left(\left|Y_{0}\right|+\left|Y_{t}\right|\right)+\left|\sum_{0<s\leq t}\Delta Y_{s}\Delta\left(X_{s}^{c}-X_{s}\right)\right|+\left|\int_{0}^{t}\left(X_{-}^{c}-X\right)\mathrm{d}Y\right|.
\end{eqnarray*}
Thus we get 
\begin{eqnarray*}
\lefteqn{\sup_{0\leq t\leq T}\left|\int_{0}^{t}Y_{-}\mathrm{d}X^{c}-\int_{0}^{t}Y_{-}\mathrm{d}X-\left[X^{cont},Y^{cont}\right]_{t}\right|}\\
 & \leq & K_{T}c\left(\left|Y_{0}\right|+\sup_{0\leq t\leq T}\left|Y_{t}\right|\right)+\sup_{0\leq t\leq T}\left|\sum_{0<s\leq t}\Delta Y_{s}\Delta\left(X_{s}^{c}-X_{s}\right)\right|+\sup_{0\leq t\leq T}\left|\int_{0}^{t}\left(X_{-}^{c}-X\right)\mathrm{d}Y\right|.
\end{eqnarray*}

Since $Y$ has càdlàg paths, it is locally bounded and hence $K_{T}c\left(\left|Y_{0}\right|+\sup_{0\leq t\leq T}\left|Y_{t}\right|\right)\rightarrow0$
$\mathbb{P}$ a.s. as $c\downarrow0.$

Since for every $t\in[0;T],$ $\left|X_{t}^{c}-X_{t}\right|\leq K_{T}c$
(condition (3)), for $s\in\left[0;t\right]$ we have $\left|\Delta\left(X_{s}^{c}-X_{s}\right)\right|\leq2K_{T}c.$
Similarly, by condition (4), 
\[
\left|\Delta\left(X_{s}^{c}-X_{s}\right)\right|\leq\left|\Delta X_{s}^{c}\right|+\left|\Delta X_{s}\right|\leq\left(L_{T}+1\right)\left|\Delta X_{s}\right|.
\]
Thus we obtain that 
\[
\left|\Delta\left(X_{s}^{c}-X_{s}\right)\right|\leq\min\left\{ 2K_{T}c,\left(L_{T}+1\right)\left|\Delta X_{s}\right|\right\} \leq\left(2K_{T}+L_{T}+1\right)\min\left\{ c,\left|\Delta X_{s}\right|\right\} 
\]
and using this, we estimate 
\begin{align*}
 & \sup_{0\leq t\leq T}\left|\sum_{0<s\leq t}\Delta Y_{s}\left(\Delta X_{s}^{c}-\Delta X_{s}\right)\right|\leq\sup_{0\leq t\leq T}\sqrt{\sum_{0<s\leq t}\left|\Delta Y_{s}\right|^{2}}\sqrt{\sum_{0<s\leq t}\left|\Delta\left(X_{s}^{c}-X_{s}\right)\right|^{2}}\\
 & =\sqrt{\sum_{0<s\leq T}\left|\Delta Y_{s}\right|^{2}}\sqrt{\sum_{0<s\leq T}\left|\Delta\left(X_{s}^{c}-X_{s}\right)\right|^{2}}\\
 & \leq\sqrt{\left[\Delta Y\right]_{T}}\left(2K_{T}+L_{T}+1\right)\sqrt{\sum_{0<s\leq T}\min\left\{ c^{2},\left|\Delta X_{s}\right|^{2}\right\} }\rightarrow0\mbox{ }\mathbb{P}\mbox{ a.s.}\mbox{ as }c\downarrow0.
\end{align*}

In order to estimate 
\[
I^{c}(T):=\sup_{0\leq t\leq T}\left|\int_{0}^{t}\left(X_{-}^{c}-X_{-}\right)\mathrm{d}Y\right|
\]
let us decompose the semimartingale $Y$ into a local martingale $M$
with bounded jumps (hence a local $L^{2}$ martingale) and a process
$A$ with locally finite variation (this is possible due to \cite[Lemma 26.5]{Kallenberg:2002}
but the decomposition may depend on the measure $\mathbb{P}),$ $Y=M+A.$
Let $\left(\tau\left(k\right)\right)_{k\geq1}$ be a sequence of stopping
times increasing to $+\infty$ such that $\left(M_{t\wedge\tau\left(k\right)}\right)_{t\geq0}$
is a square integrable martingale. We will use elementary estimate
$\left(a+b\right)^{2}\leq2a^{2}+2b^{2},$ the Burkholder inequality
and localization. On the set $\Omega_{N}=\left\{ \omega\in\Omega:TV\left(A,T\right)\leq N\right\} $
we have 
\begin{eqnarray*}
\lefteqn{\mathbb{E}\left[\sup_{0\leq t\leq T\wedge\tau\left(k\right)}\left|\int_{0}^{t}\left(X_{-}^{c}-X_{-}\right)\mathrm{d}Y\right|^{2};\Omega_{N}\right]}\\
 & \leq & 2\mathbb{E}\sup_{0\leq t\leq T\wedge\tau\left(k\right)}\left|\int_{0}^{t}\left(X_{-}^{c}-X_{-}\right)\mathrm{d}M\right|^{2}+2\left[\mathbb{E}\left|\int_{0}^{T}\left|X_{-}^{c}-X_{-}\right|\mathrm{d}A\right|^{2};\Omega_{N}\right]\\
 & \leq & 2\left(4K_{T}^{2}c^{2}\mathbb{E}\left[M,M\right]_{T\wedge\tau\left(k\right)}+K_{T}^{2}c^{2}N^{2}\right)\leq8\left(\mathbb{E}\left[M,M\right]_{T\wedge\tau\left(k\right)}+N^{2}\right)K_{T}^{2}c^{2}.
\end{eqnarray*}

Let now $\left(c\left(n\right)\right)_{n\geq1}$ be such a sequence
that $\sum_{n=1}^{\infty}c\left(n\right)^{2}<+\infty.$ We have 
\begin{align*}
 & \mathbb{E}\left[\sum_{n=1}^{\infty}\sup_{0\leq t\leq T\wedge\tau\left(k\right)}\left|\int_{0}^{t}\left(X_{-}^{c\left(n\right)}-X_{-}\right)\mathrm{d}Y\right|^{2};\Omega_{N}\right]\\
 & =\sum_{n=1}^{\infty}\mathbb{E}\left[\sup_{0\leq t\leq T\wedge\tau\left(k\right)}\left|\int_{0}^{t}\left(X_{-}^{c\left(n\right)}-X_{-}\right)\mathrm{d}Y\right|^{2};\Omega_{N}\right]\\
 & \leq8\left(\mathbb{E}\left[M,M\right]_{T\wedge\tau\left(k\right)}+N^{2}\right)K_{T}^{2}\sum_{n=1}^{\infty}c\left(n\right)^{2}<+\infty.
\end{align*}
Hence, the sequence $I^{c\left(n\right)}(T\wedge\tau\left(k\right)),n=1,2,...,$
converges to $0$ on the set $\Omega_{N}.$ Since $\Omega=\bigcup_{N\geq1}\Omega_{N},$
we get that $I^{c\left(n\right)}(T\wedge\tau\left(k\right))$ converges
$\mathbb{P}$ a.s. to $0.$ Finally, since $\tau\left(k\right)\rightarrow+\infty$
a.s. we get that $I^{c\left(n\right)}(T)$ converges $\mathbb{P}$
a.s. to $0.$

\end{proof}

\section{Counterexamples \label{sec:Counterexamples}}

In this section, using further properties of the sequence $X^{c}$
defined in Section \ref{sec:Existence}, which we prove in the Appendix,
we will show that even for the integrator $X=B$ being a standard
Brownian motion Theorem \ref{thm:leb_stieltjes_stoch} can not be
extended to the case when $Y$ is not a semimartingale. To prove this
we start with few definitions. First, we define sequence $\beta\left(n\right),$
$n=1,2,\ldots$ in the following way $\beta\left(1\right)=1$ and
for $n=2,3,\ldots$ 
\[
\beta\left(n\right)=n^{2}\beta\left(n-1\right)^{6}.
\]
Now we define $\alpha\left(n\right):=\beta\left(n\right)^{1/2},$
$\gamma\left(n\right):=\beta\left(n\right)^{-1}$ and 
\[
Y:=\sum_{n=2}^{\infty}\alpha\left(n\right)\left(B-B^{\gamma\left(n\right)}\right),
\]
where $B$ is a standard Brownian motion and for any $c>0,$ $B^{c}$
is defined as in Section \ref{sec:Existence} (with formulas (\ref{eq:defXc})
or symmetric). Notice that $Y$ is well defined, since 
\[
\left|\alpha\left(n\right)\left(B-B^{\gamma\left(n\right)}\right)\right|\leq\alpha\left(n\right)\gamma\left(n\right)=\gamma\left(n\right)^{1/2}
\]
and for $n=2,3\ldots,$ 
\begin{eqnarray*}
\gamma\left(n\right)^{1/2} & = & \beta\left(n\right)^{-1/2}=n^{-1}\beta\left(n-1\right)^{-3}\\
 & \leq & 2^{-1}\beta\left(n-1\right)^{-1/2}=2^{-1}\gamma\left(n-1\right)^{1/2}.
\end{eqnarray*}
Hence the series 
\[
\sum_{n=2}^{\infty}\alpha\left(n\right)\left(B-B^{\gamma\left(n\right)}\right)
\]
is uniformly convergent to a bounded, continuous process, adapted
to the natural filtration of $B.$ We will use facts proved in Appendix
as well as \cite[Theorem 1]{LM2011}, stating that for any continuous
semimartingale $X$ 
\[
\lim_{c\downarrow0}c\cdot TV^{c}\left(X,1\right)=\left\langle X\right\rangle _{1}
\]
(where $TV^{c}\left(X,T\right)$ was defined in Remark \ref{Skoro}),
from which follows that 
\begin{equation}
\lim_{c\downarrow0}c\cdot TV^{c}\left(B,1\right)=1.\label{eq:TVlimit}
\end{equation}
We will also use the Gaussian concentration of $TV^{c}\left(B,T\right)$
(see \cite[Remark 6]{BL}), from which follows that for $c\in\left(0;1\right)$
and $k=1,2,\ldots,$ 
\begin{equation}
\mathbb{E}TV^{c}\left(B,1\right)^{k}\leq C_{k}c^{-k},\label{eq:TVconc}
\end{equation}
where $C_{k}$ is a constant depending on $k$ only.

We have \begin{fact} \label{counterexample1}The sequence of integrals
\[
\int_{0}^{1}Y_{-}\mathrm{d}B^{\gamma\left(n\right)}
\]
diverges. \end{fact}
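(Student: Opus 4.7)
My plan is to expand the integral termwise into a single diagonal term, which diverges at rate $\alpha(n)$, plus off-diagonal pieces of strictly smaller order. Because $Y$ is the uniform limit of its partial sums and $B^{\gamma(n)}$ has finite total variation on $[0,1]$, the pathwise Lebesgue--Stieltjes integral commutes with the sum:
\[
\int_{0}^{1}Y_{-}\,\mathrm{d}B^{\gamma(n)}=\sum_{m=2}^{\infty}\alpha(m)\,I_{m,n},\qquad I_{m,n}:=\int_{0}^{1}\bigl(B-B^{\gamma(m)}\bigr)_{-}\mathrm{d}B^{\gamma(n)}.
\]

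For the diagonal term $m=n$, one reads off directly from (\ref{eq:defXc}) that $B^{\gamma(n)}$ moves upward only where $B-B^{\gamma(n)}=+\gamma(n)$ and downward only where $B-B^{\gamma(n)}=-\gamma(n)$; hence $I_{n,n}=\gamma(n)\,TV(B^{\gamma(n)},1)$. Using the Appendix to identify $TV(B^{\gamma(n)},1)$ with a multiple of the truncated variation $TV^{\gamma(n)}(B,1)$ and then invoking (\ref{eq:TVlimit}) along $c=\gamma(n)\downarrow 0$, we obtain $\gamma(n)\,TV(B^{\gamma(n)},1)\to c_{0}>0$ a.s., so $\alpha(n)I_{n,n}\sim c_{0}\,\alpha(n)\to+\infty$ a.s. For the tail ($m>n$) the crude bound $|B-B^{\gamma(m)}|\le\gamma(m)$ together with the super-geometric decay $\gamma(m+1)^{1/2}\le(m+1)^{-1}\gamma(m)^{3}$ forced by the recurrence $\beta(n)=n^{2}\beta(n-1)^{6}$ shows $\sum_{m>n}|\alpha(m)I_{m,n}|\lesssim\gamma(n)^{-1}\gamma(n+1)^{1/2}=o(\alpha(n))$ a.s.

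The head ($m<n$) is the delicate obstacle. Here I would apply integration by parts on the continuous semimartingale $B-B^{\gamma(m)}$, using the vanishing of the covariation $[B-B^{\gamma(m)},B^{\gamma(n)}]$ (both approximants are continuous processes of finite variation), to obtain
\[
I_{m,n}=\bigl(B-B^{\gamma(m)}\bigr)_{1}B^{\gamma(n)}_{1}-\int_{0}^{1}B^{\gamma(n)}\,\mathrm{d}B+\int_{0}^{1}B^{\gamma(n)}\,\mathrm{d}B^{\gamma(m)}.
\]
The boundary term has $L^{2}$ norm $O(\gamma(m))$; the It\^o integral, by It\^o's isometry together with $|B^{\gamma(n)}|\le\sup_{[0,1]}|B|+1$, has $L^{2}$ norm bounded uniformly in $n$; the Stieltjes integral is pathwise dominated by $\sup|B^{\gamma(n)}|\cdot TV(B^{\gamma(m)},1)$, whose $L^{2}$ norm is $O(\gamma(m)^{-1})$ via Cauchy--Schwarz and the Gaussian concentration bound (\ref{eq:TVconc}). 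Together these give $\mathbb{E}|I_{m,n}|^{2}\lesssim\gamma(m)^{-2}$ uniformly in $n>m$.

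Cauchy--Schwarz then yields $\mathbb{E}\bigl|\sum_{m=2}^{n-1}\alpha(m)I_{m,n}\bigr|^{2}\le n\sum_{m=2}^{n-1}\alpha(m)^{2}\,\mathbb{E}|I_{m,n}|^{2}\lesssim n\,\beta(n-1)^{3}$, the last step because $\beta(m)^{3}$ grows super-geometrically in $m$ so the sum is dominated by its last term. Since $\alpha(n)^{2}=\beta(n)=n^{2}\beta(n-1)^{6}$, the ratio is $O(1/(n\beta(n-1)^{3}))\to 0$, so Chebyshev gives $\sum_{m<n}\alpha(m)I_{m,n}=o_{\mathbb{P}}(\alpha(n))$. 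Assembling the three contributions, $\int_{0}^{1}Y_{-}\,\mathrm{d}B^{\gamma(n)}=\alpha(n)(1+o_{\mathbb{P}}(1))\to+\infty$ in probability, establishing divergence. The sixth power in the recurrence $\beta(n)=n^{2}\beta(n-1)^{6}$ enters precisely at this last step: it is what makes $\beta(n)$ outweigh the accumulated $L^{2}$-mass $\beta(n-1)^{3}$ of all preceding head terms.
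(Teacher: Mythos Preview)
Your argument is correct and follows essentially the same route as the paper: the same diagonal/tail/head split, the Skorohod-map identity $I_{n,n}=\gamma(n)\,TV(B^{\gamma(n)},1)$ for the diagonal, the crude sup-norm bound for the tail, and integration by parts plus an $L^{2}$/Chebyshev estimate for the head. Two minor remarks: the Appendix relates $TV(B^{\gamma(n)},1)$ to $TV^{2\gamma(n)}(B,1)$ rather than $TV^{\gamma(n)}(B,1)$, and your Cauchy--Schwarz bound $\mathbb{E}|I_{m,n}|^{2}\lesssim\gamma(m)^{-2}$ is actually sharper than the paper's (which uses $a^{2}b^{2}\le\tfrac12 a^{4}+\tfrac12 b^{4}$ and arrives at $\gamma(m)^{-4}$, hence a head bound $Cn^{2}\beta(n-1)^{5}$ versus your $n\beta(n-1)^{3}$); either suffices against $\alpha(n)^{2}=n^{2}\beta(n-1)^{6}$.
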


\begin{proof} Let us fix $n=2,3,4,\ldots$ and split $\int_{0}^{1}Y_{-}\mathrm{d}B^{\gamma\left(n\right)}$
into two summands, $\int_{0}^{1}Y_{-}\mathrm{d}B^{\gamma\left(n\right)}=I+II,$
where 
\[
I=\sum_{m=2}^{n-1}\alpha\left(m\right)\int_{0}^{1}\left(B-B^{\gamma\left(m\right)}\right)\mathrm{d}B^{\gamma\left(n\right)}
\]
and 
\[
II=\int_{0}^{1}\left\{ \alpha\left(n\right)\left(B-B^{\gamma\left(n\right)}\right)+\sum_{m=n+1}^{\infty}\alpha\left(m\right)\left(B-B^{\gamma\left(m\right)}\right)\right\} \mathrm{d}B^{\gamma\left(n\right)}.
\]

Firstly, we consider the second summand, $II.$ Let us notice that
for $m\geq3,$ $\gamma\left(m\right)^{1/2}\leq3^{-1}\gamma\left(m-1\right)^{1/2}$
which implies 
\begin{eqnarray*}
\left|\sum_{m=n+1}^{\infty}\alpha\left(m\right)\left(B-B^{\gamma\left(m\right)}\right)\right| & \leq & \sum_{m=n+1}^{\infty}\alpha\left(m\right)\gamma\left(m\right)=\sum_{m=n+1}^{\infty}\gamma\left(m\right)^{1/2}\\
 & \leq & \gamma\left(n\right)^{1/2}\sum_{l=1}^{\infty}3^{-l}=\frac{1}{2}\gamma\left(n\right)^{1/2}.
\end{eqnarray*}
Hence 
\[
\left|\int_{0}^{1}\sum_{m=n+1}^{\infty}\alpha\left(m\right)\left(B-B^{\gamma\left(m\right)}\right)\mathrm{d}B^{\gamma\left(n\right)}\right|\leq\frac{1}{2}\gamma\left(n\right)^{1/2}\int_{0}^{1}\left|\mathrm{d}B^{\gamma\left(n\right)}\right|=\frac{1}{2}\gamma\left(n\right)^{1/2}\cdot TV\left(B^{\gamma\left(n\right)},1\right).
\]
By the equality (\ref{eq:krejci}) (see the Appendix), 
\[
\alpha\left(n\right)\int_{0}^{1}\left(B-B^{\gamma\left(n\right)}\right)\mathrm{d}B^{\gamma\left(n\right)}=\gamma\left(n\right)^{1/2}TV\left(B^{\gamma\left(n\right)},1\right)
\]
and by two last estimates we get 
\begin{equation}
II\geq\frac{1}{2}\gamma\left(n\right)^{1/2}TV\left(B^{\gamma\left(n\right)},1\right)\geq\frac{1}{2}\gamma\left(n\right)^{1/2}TV^{2\gamma\left(n\right)}\left(B,1\right),\label{eq:estI}
\end{equation}
where the last estimate follows from $TV\left(B^{\gamma\left(n\right)},1\right)\geq TV^{2\gamma\left(n\right)}\left(B,1\right)$
(see (\ref{eq:TVestim}) in the Appendix).

Now let us consider the first summand, $I.$ For $m=2,\ldots,n-1,$
using integration by parts we calculate 
\begin{eqnarray*}
\int_{0}^{1}\left(B-B^{\gamma\left(m\right)}\right)\mathrm{d}B^{\gamma\left(n\right)} & = & \int_{0}^{1}B\mathrm{d}B^{\gamma\left(n\right)}-\int_{0}^{1}B^{\gamma\left(m\right)}\mathrm{d}B^{\gamma\left(n\right)}\\
 & = & \left(B_{1}-B_{1}^{\gamma\left(m\right)}\right)B_{1}^{\gamma\left(n\right)}+\int_{0}^{1}B^{\gamma\left(n\right)}\mathrm{d}B^{\gamma\left(m\right)}-\int_{0}^{1}B^{\gamma\left(n\right)}\mathrm{d}B.
\end{eqnarray*}
By this, the inequality $\left(a+b+c\right)^{2}\leq3\left(a^{2}+b^{2}+c^{2}\right)$
and the Itô isometry we estimate 
\begin{eqnarray}
\mathbb{E}\left(\int_{0}^{1}\left(B-B^{\gamma\left(m\right)}\right)\mathrm{d}B^{\gamma\left(n\right)}\right)^{2} & \leq & 3\gamma\left(m\right)^{2}\mathbb{E}\left(B_{1}^{\gamma\left(n\right)}\right)^{2}\nonumber \\
 &  & +3\mathbb{E}\left\{ \sup_{0\leq s\leq1}\left(B_{s}^{\gamma\left(n\right)}\right)^{2}TV\left(B^{\gamma\left(m\right)},1\right)^{2}\right\} \nonumber \\
 &  & +3\int_{0}^{1}\mathbb{E}\left(B_{s}^{\gamma\left(n\right)}\right)^{2}\mathrm{d}s.\label{eq:estsq}
\end{eqnarray}
Further, from $a^{2}b^{2}\leq\frac{1}{2}a^{4}+\frac{1}{2}b^{4}$ and
then $\left|B_{s}^{\gamma\left(n\right)}\right|\leq\left|B_{s}\right|+\gamma\left(n\right),$
$TV\left(B^{\gamma\left(m\right)},1\right)\leq TV^{2\gamma\left(m\right)}\left(B,1\right)+2\gamma\left(m\right)$
(this follows from the estimate (\ref{eq:TVestim})) and $\left(a+b\right)^{4}\leq8\left(a^{4}+b^{4}\right),$
\begin{eqnarray*}
\mathbb{E}\left\{ \sup_{0\leq s\leq1}\left(B_{s}^{\gamma\left(n\right)}\right)^{2}TV\left(B^{\gamma\left(m\right)},1\right)^{2}\right\}  & \leq & \frac{1}{2}\mathbb{E}\sup_{0\leq s\leq1}\left(B_{s}^{\gamma\left(n\right)}\right)^{4}+\frac{1}{2}\mathbb{E}TV\left(B^{\gamma\left(m\right)},1\right)^{4}\\
 & \leq & \frac{1}{2}8\mathbb{E}\sup_{0\leq s\leq1}\left(B_{s}^{4}+\gamma\left(n\right)^{4}\right)+\frac{1}{2}8\mathbb{E}\left(TV^{2\gamma\left(m\right)}\left(B,1\right)^{4}+2^{4}\gamma\left(n\right)^{4}\right)\\
 & \leq & 4\mathbb{E}\sup_{0\leq s\leq1}B_{s}^{4}+4\mathbb{E}\sup_{0\leq s\leq1}TV^{2\gamma\left(m\right)}\left(B,1\right)^{4}+1.
\end{eqnarray*}
Similarly, by$\left|B_{s}^{\gamma\left(n\right)}\right|\leq\left|B_{s}\right|+\gamma\left(n\right)$
and $\left(a+b\right)^{2}\leq2\left(a^{2}+b^{2}\right)$ we calculate
\[
\mathbb{E}\left(B_{1}^{\gamma\left(n\right)}\right)^{2}\leq2\mathbb{E}\left(B_{1}^{2}+\gamma\left(n\right)^{2}\right)\leq3
\]
and 
\[
\int_{0}^{1}\mathbb{E}\left(B_{s}^{\gamma\left(n\right)}\right)^{2}\mathrm{d}s\leq3.
\]

Hence, by (\ref{eq:estsq}) and last three estimates, 
\begin{eqnarray}
\mathbb{E}\left(\sum_{m=2}^{n-1}\alpha\left(m\right)\int_{0}^{1}\left(B-B^{\gamma\left(m\right)}\right)\mathrm{d}B^{\gamma\left(n\right)}\right)^{2} & \leq & n\sum_{m=2}^{n-1}\alpha\left(m\right)^{2}\mathbb{E}\left(\int_{0}^{1}\left(B-B^{\gamma\left(m\right)}\right)\mathrm{d}B^{\gamma\left(n\right)}\right)^{2}\nonumber \\
 & \leq & n\sum_{m=2}^{n-1}\alpha\left(m\right)^{2}3\left(3\gamma\left(m\right)^{2}+4\mathbb{E}\sup_{0\leq s\leq1}B_{s}^{4}+4\mathbb{E}TV^{2\gamma\left(m\right)}\left(B,1\right)^{4}+4\right)\nonumber \\
 & \leq & n^{2}\alpha\left(n-1\right)^{2}3\left(7+4\mathbb{E}\sup_{0\leq s\leq1}B_{s}^{4}+4\mathbb{E}TV^{2\gamma\left(n-1\right)}\left(B,1\right)^{4}\right).\label{eq:estsq1}
\end{eqnarray}
By the Gaussian concentration properties of $\sup_{0\leq s\leq1}B_{s}$
and $TV^{2\gamma\left(n-1\right)}\left(B,1\right)$ (estimate (\ref{eq:TVconc})),
there exists universal constants $\tilde{C},$ $C$ such that 
\[
\mathbb{E}TV^{2\gamma\left(n-1\right)}\left(B,1\right)^{4}\leq\tilde{C}\gamma\left(n-1\right)^{-4}
\]
and 
\begin{equation}
3\left(7+4\mathbb{E}\sup_{0\leq s\leq1}B_{s}^{4}+4\mathbb{E}TV^{2\gamma\left(n-1\right)}\left(B,1\right)^{4}\right)\leq C\gamma\left(n-1\right)^{-4}=C\beta\left(n-1\right)^{4}.\label{eq:estsq2}
\end{equation}
By (\ref{eq:estsq1}) and (\ref{eq:estsq2}), 
\begin{equation}
\mathbb{E}\left(\sum_{m=2}^{n-1}\alpha\left(m\right)\int_{0}^{1}\left(B-B^{\gamma\left(m\right)}\right)\mathrm{d}B^{\gamma\left(n\right)}\right)^{2}\leq n^{2}\alpha\left(n-1\right)^{2}C\beta\left(n-1\right)^{4}=Cn^{2}\beta\left(n-1\right)^{5}.\label{eq:estsq3}
\end{equation}

Now, by (\ref{eq:estsq3}) and the Chebyshev inequality we get 
\[
\mathbb{P}\left(\left|I\right|\geq\sqrt{3C}n\beta\left(n-1\right)^{5/2}\right)\leq\frac{1}{3}.
\]
Thus, for the set $A_{n}:=\left\{ \left|I\right|\leq\sqrt{3C}n\beta\left(n-1\right)^{5/2}\right\} $
we have $\mathbb{P}\left(A_{n}\right)\geq2/3,$ and by (\ref{eq:estI})
on $A_{n}$ we have 
\begin{eqnarray*}
\int_{0}^{1}Y_{-}\mathrm{d}B^{\gamma\left(n\right)} & = & I+II\geq\frac{1}{2}\gamma\left(n\right)^{1/2}TV\left(B^{\gamma\left(n\right)},1\right)-\sqrt{2C}n\beta\left(n-1\right)^{5/2}\\
 & \geq & \frac{1}{2}\gamma\left(n\right)^{-1/2}\gamma\left(n\right)TV^{2\gamma\left(n\right)}\left(B,1\right)-\sqrt{2C}n\beta\left(n-1\right)^{5/2}\\
 & = & \frac{1}{2}\beta\left(n\right)^{1/2}\gamma\left(n\right)TV^{2\gamma\left(n\right)}\left(B,1\right)-\sqrt{2C}n\beta\left(n-1\right)^{5/2}.
\end{eqnarray*}
Let us choose such $N$ that for any $n\geq N,$ 
\[
\mathbb{P}\left(\gamma\left(n\right)TV^{2\gamma\left(n\right)}\left(B,1\right)\geq\frac{1}{4}\right)\geq\frac{2}{3}
\]
(this is possible by (\ref{eq:TVlimit})). By the definition of $\beta\left(n\right),$
on the set $A_{n}\bigcap D_{n},$ where 
\[
D_{n}:=\left\{ \gamma\left(n\right)TV^{2\gamma\left(n\right)}\left(B,1\right)\geq\frac{1}{4}\right\} ,
\]
we get 
\[
\frac{1}{2}\beta\left(n\right)^{1/2}\gamma\left(n\right)TV^{2\gamma\left(n\right)}\left(B,1\right)-\sqrt{3C}n\beta\left(n-1\right)^{5/2}\geq\frac{1}{8}n\beta\left(n-1\right)^{3}-\sqrt{3C}n\beta\left(n-1\right)^{5/2}.
\]
Since 
\[
\frac{1}{8}n\beta\left(n-1\right)^{3}-\sqrt{3C}n\beta\left(n-1\right)^{5/2}\rightarrow+\infty
\]
as $n\rightarrow+\infty$ and 
\[
\mathbb{P}\left(A_{n}\bigcap D_{n}\right)\geq\frac{1}{3},
\]
we get that the sequence of integrals 
\[
\int_{0}^{1}Y_{-}\mathrm{d}B^{\gamma\left(n\right)}
\]
is divergent. \end{proof}

\begin{rem} From Theorem \ref{thm:leb_stieltjes_stoch} and just
proved Fact \ref{counterexample1} it follows that the bounded, continuous
process 
\[
Y=\sum_{n=2}^{\infty}\alpha\left(n\right)\left(B-B^{\gamma\left(n\right)}\right),
\]
adapted to the natural filtration of $B,$ can not be a semimartingale.\end{rem}

The construction of sequences $Z^{\delta\left(n\right)},$ $\tilde{B}^{\delta\left(n\right)},$
$n=1,2,\ldots$ such that the sequence of integrals $\int_{0}^{1}Z^{\delta\left(n\right)}\mathrm{d}\tilde{B}^{\delta\left(n\right)},$
$n=1,2,\ldots,$ is divergent as $n\uparrow+\infty$ and $Z^{\delta\left(n\right)},$
$\tilde{B}^{\delta\left(n\right)}$ satisfy conditions (1)-(5) for
some semimartingales $Z,$ $\tilde{B}.$ is much easier. We set $\delta\left(n\right)=1/n,$
$Z^{\delta\left(n\right)}=2B^{1/n^{2}}+n\left(B^{1/\left(2n^{2}\right)}-B^{1/n^{2}}\right),$
$\tilde{B}^{\delta\left(n\right)}=B^{1/n^{2}}.$ We easily check that
$Z^{\delta\left(n\right)}$ satisfies (1)-(5) for $Z=2B$ and trivially
$\tilde{B}^{\delta\left(n\right)}$ satisfies (1)-(5) for $\tilde{B}=B.$
Since for any $c>0,$ on the set $B^{c}=B-c,$ $\mathrm{d}B^{c}\geq0$,
and on the set $B^{c}=B+c$, $\mathrm{d}B^{c}\leq0$ (see Lemma \ref{lem:skoroproblem}
in the Appendix), and $-c/2\leq B-B^{c/2}\leq c/2,$ we get $B^{c/2}-B^{c}\geq c/2$
on the set $\mathrm{d}B^{c}>0$ and $B^{c/2}-B^{c}\leq-c/2$ on the
set $\mathrm{d}B^{c}<0.$ Thus 
\begin{align*}
\int_{0}^{1}Z^{\delta\left(n\right)}\mathrm{d}\tilde{B}^{\delta\left(n\right)}-\int_{0}^{1}2B^{1/n^{2}}\mathrm{d}B^{1/n^{2}} & =\int_{0}^{1}n\left(B^{1/\left(2n^{2}\right)}-B^{1/n^{2}}\right)\mathrm{d}B^{1/n^{2}}\\
 & \geq n\frac{1}{2n^{2}}\int_{0}^{1}\left|\mathrm{d}B^{1/n^{2}}\right|=\frac{n}{2}n^{-2}TV\left(B^{1/n^{2}},1\right)\\
 & \geq\frac{n}{2}n^{-2}TV^{1/n^{2}}\left(B,1\right)\cdot
\end{align*}
Now, by the usual Lebesque-Stieltjes integration, $\int_{0}^{1}2B^{1/n^{2}}\mathrm{d}B^{1/n^{2}}=\left(B^{1/n^{2}}\right)^{2},$
and by the just obtained estimate and (\ref{eq:TVlimit}) we see that
\[
\int_{0}^{1}Z^{\delta\left(n\right)}\mathrm{d}\tilde{B}^{\delta\left(n\right)}\rightarrow+\infty.
\]

\section{Appendix}

In this Appendix we will prove estimates used in Section \ref{sec:Counterexamples},
concernig the process $X^{c},$ constructed in Section \ref{sec:Existence}.
Before we proceed, let us recall the definitions of truncated variation,
upward truncated variation and downward truncated variation from Remark
\ref{Skoro}. Let us notice that for $c=0$ we simply get that $TV^{0}$
is the (finite or infinite) total variation and $UTV=UTV^{0}$ and
$DTV=DTV^{0}$ are positive and negative parts of the total variation.
Moreover, we have the Hahn-Jordan decomposition, $TV=UTV+DTV.$ \begin{lem}
For the total variation of the process $X^{c},$ constructed in Section
\ref{sec:Existence}, one has the following estimates 
\begin{equation}
TV^{2c}\left(X,T\right)\leq TV\left(X^{c},T\right)\leq TV^{2c}\left(X,T\right)+2c.\label{eq:TVestim}
\end{equation}

\end{lem}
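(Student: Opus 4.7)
The plan is to establish the two inequalities of (\ref{eq:TVestim}) separately. The lower bound $TV^{2c}(X,T)\leq TV(X^{c},T)$ is immediate from condition (3), which for this construction holds with $K_{T}=1$: by the triangle inequality, for any partition $0\leq t_{0}<\cdots<t_{n}\leq T$ one has $|X_{t_{i}}-X_{t_{i-1}}|-2c\leq|X^{c}_{t_{i}}-X^{c}_{t_{i-1}}|$, hence
\[
\sum_{i}\max\bigl\{|X_{t_{i}}-X_{t_{i-1}}|-2c,0\bigr\}\leq\sum_{i}|X^{c}_{t_{i}}-X^{c}_{t_{i-1}}|\leq TV(X^{c},T),
\]
and taking the supremum over partitions gives the claim.

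For the upper bound $TV(X^{c},T)\leq TV^{2c}(X,T)+2c$ I would exploit the explicit monotone-plus-jump structure of $X^{c}$ coming from formula (\ref{eq:defXc}). Set $M_{k}:=\sup_{[T_{u,k}^{2c},T_{d,k}^{2c})}X$ and $m_{k}:=\inf_{[T_{d,k}^{2c},T_{u,k+1}^{2c})}X$. On each upward sub-interval $X^{c}$ rises monotonically from $X_{T_{u,k}^{2c}}-c$ to $M_{k}-c$ (contributing $M_{k}-X_{T_{u,k}^{2c}}$), on each downward sub-interval it falls monotonically from $X_{T_{d,k}^{2c}}+c$ to $m_{k}+c$ (contributing $X_{T_{d,k}^{2c}}-m_{k}$), and the jumps of $X^{c}$ at the separating stopping times have magnitudes $M_{k}-X_{T_{d,k}^{2c}}-2c$ and $X_{T_{u,k+1}^{2c}}-m_{k}-2c$, both nonnegative by the very definitions of $T_{d,k}^{2c}$ and $T_{u,k+1}^{2c}$. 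Fixing $\varepsilon>0$ and choosing $\tau_{k}^{\varepsilon}\in[T_{u,k}^{2c},T_{d,k}^{2c})$, $\sigma_{k}^{\varepsilon}\in[T_{d,k}^{2c},T_{u,k+1}^{2c})$ with $X_{\tau_{k}^{\varepsilon}}>M_{k}-\varepsilon$, $X_{\sigma_{k}^{\varepsilon}}<m_{k}+\varepsilon$, summing the contributions along one full cycle produces, up to an $O(\varepsilon)$ error,
\[
TV\bigl(X^{c},[\sigma_{k-1}^{\varepsilon},\sigma_{k}^{\varepsilon}]\bigr)=(X_{\tau_{k}^{\varepsilon}}-X_{\sigma_{k-1}^{\varepsilon}}-2c)+(X_{\tau_{k}^{\varepsilon}}-X_{\sigma_{k}^{\varepsilon}}-2c),
\]
whose right-hand side is exactly the $TV^{2c}$-sum of $X$ along the three-point partition $\{\sigma_{k-1}^{\varepsilon},\tau_{k}^{\varepsilon},\sigma_{k}^{\varepsilon}\}$, hence bounded by $TV^{2c}(X,[\sigma_{k-1}^{\varepsilon},\sigma_{k}^{\varepsilon}])$. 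Summing over $k$ up to the last completed cycle before $T$ (finite by Remark \ref{finK}) and invoking the evident superadditivity of $TV^{2c}$ over adjacent subintervals bounds the ``bulk'' variation of $X^{c}$ by $TV^{2c}(X,T)+O(\varepsilon)$.

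The additive slack of $2c$ comes from the two boundary segments. On the initial piece $[0,\tau_{0}^{\varepsilon}]$ the process $X^{c}$ is constant on $[0,T_{u,0}^{2c})$, jumps by $X_{T_{u,0}^{2c}}-X_{0}-c$ at $T_{u,0}^{2c}$, and rises monotonically to $M_{0}-c$, giving variation $M_{0}-X_{0}-c$; comparing with $\max\{M_{0}-X_{0}-2c,0\}\leq TV^{2c}(X,[0,\tau_{0}^{\varepsilon}])$ and distinguishing the cases $M_{0}-X_{0}\geq 2c$ and $M_{0}-X_{0}<2c$ (and using $M_{0}>X_{0}+c$) shows that the excess is at most $c$. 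A symmetric estimate applies to the final segment past the last chosen extremum, contributing at most another $c$. Letting $\varepsilon\downarrow 0$ then yields $TV(X^{c},T)\leq TV^{2c}(X,T)+2c$.

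The main technical obstacle is that suprema and infima of càdlàg paths over half-open intervals need not be attained, so argmax/argmin times must be replaced by $\varepsilon$-near approximants $\tau_{k}^{\varepsilon},\sigma_{k}^{\varepsilon}$ and the resulting errors carried through the variation bookkeeping. Secondary care is required in the case analysis on the position of $T$ within the cycle structure (including the trivial case $T<T_{u,0}^{2c}$, where $X^{c}\equiv X_{0}$ and the bound holds with both sides equal to zero).
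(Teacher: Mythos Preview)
Your proposal is correct and follows essentially the same route as the paper. The lower bound argument is identical. For the upper bound, both you and the paper exploit the piecewise-monotone structure of $X^{c}$ via the extrema $M_{k}=\sup_{[T_{u,k}^{2c},T_{d,k}^{2c})}X$ and $m_{k}=\inf_{[T_{d,k}^{2c},T_{u,k+1}^{2c})}X$ and compare the resulting increments with the $2c$-truncated increments of $X$; the only organisational differences are that the paper separates $TV^{2c}$ into its upward and downward parts $UTV^{2c}+DTV^{2c}$ and does a case analysis on the position of $T$ in the cycle structure, whereas you sum cycle-by-cycle and invoke superadditivity of $TV^{2c}$ over adjacent subintervals. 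Your explicit use of $\varepsilon$-near extremisers $\tau_{k}^{\varepsilon},\sigma_{k}^{\varepsilon}$ is in fact more careful than the paper, which writes inequalities like $UTV^{2c}(X,T)\geq\sum_{i}(M_{i}^{2c}-m_{i-1}^{2c}-2c)$ directly, tacitly relying on the same approximation step you spell out.
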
 \begin{proof} The lower bound in (\ref{eq:TVestim}) follows
directly from the estimate 
\[
\left|X_{t}^{c}-X_{s}^{c}\right|\geq\max\left\{ \left|X_{t}-X_{s}\right|-2c,0\right\} ,
\]
valid for any $0\leq s<t\leq T,$ which follows directly from inequalities
$\left|X_{s}^{c}-X_{s}\right|\leq c,$ $\left|X_{t}^{c}-X_{t}\right|\leq c$
and the triangle inequality.

To prove the opposite inequality, let us assume that $T_{d}^{2c}X\geq T_{u}^{2c}X$
and denote $M_{k}^{2c}=\sup_{t\in\left[T_{u,k}^{2c};T_{d,k}^{2c}\right)}X_{t},$
$m_{k}^{2c}=\inf_{t\in\left[T_{d,k}^{2c};T_{u,k+1}^{2c}\right)}X_{t},$
$k=0,1,...,$ and consider three possibilities. 
\begin{itemize}
\item $T\in\left[0;T_{u,0}^{2c}\right).$ In this case $TV\left(X^{c},T\right)=UTV\left(X^{c},T\right)=DTV\left(X^{c},T\right)=0.$ 
\item $T\in\left[T_{u,0}^{2c};T_{d,0}^{2c}\right).$ In this case 
\[
UTV\left(X^{c},T\right)=\sup_{t\in\left[T_{u,0}^{2c};T\right]}X_{t}-c-X_{0},\mbox{ }DTV\left(X^{c},T\right)=0,
\]
and 
\[
TV\left(X^{c},T\right)=UTV\left(X^{c},T\right)+DTV\left(X^{c},T\right).
\]
Now, by the definition of $TV^{2c}$ it is not difficult to see that
\[
TV^{2c}\left(X,T\right)\geq\max\left\{ \sup_{t\in\left[T_{u,0}^{2c};T\right]}X_{t}-X_{0}-3c,0\right\} \geq TV\left(X^{c},T\right)-2c.
\]

\item $T\in\left[T_{u,k}^{2c};T_{d,k}^{2c}\right),$ for some $k=1,2,...$
In this case, using monotonicity of $X^{c}$ on the intervals $\left[T_{u,k}^{2c};T_{d,k}^{2c}\right]$
and $\left[T_{d,k}^{2c};T_{u,k+1}^{2c}\right],k=0,1,...,$ and formula
(\ref{eq:defXc}) we calculate 
\begin{eqnarray*}
UTV\left(X^{c},T\right) & = & \left(M_{0}^{2c}-c-X_{0}\right)+\sum_{i=1}^{k-1}\left(M_{i}^{2c}-m_{i-1}^{2c}-2c\right)\\
 &  & +\sup_{t\in\left[T_{u,k}^{2c};T\right]}X_{t}-m_{k-1}^{2c}-2c,
\end{eqnarray*}
\[
DTV\left(X^{c},T\right)=\sum_{i=0}^{k-1}\left(M_{i}^{2c}-m_{i}^{2c}-2c\right)
\]
and 
\[
TV\left(X^{c},T\right)=UTV\left(X^{c},T\right)+DTV\left(X^{c},T\right).
\]
Now it is not difficult to see that 
\begin{eqnarray*}
UTV^{2c}\left(X,T\right) & \geq & \max\left\{ M_{0}^{2c}-X_{0}-3c,0\right\} +\sum_{i=1}^{k-1}\left(M_{i}^{2c}-m_{i-1}^{2c}-2c\right)\\
 &  & +\sup_{t\in\left[T_{u,k}^{2c};T\right]}X_{t}-m_{k-1}^{2c}-2c\geq UTV\left(X^{c},T\right)-2c,
\end{eqnarray*}
\[
DTV^{2c}\left(X,T\right)\geq\sum_{i=0}^{k-1}\left(M_{i}^{2c}-m_{i}^{2c}-2c\right)=DTV\left(X^{c},T\right)
\]
and 
\[
TV^{2c}\left(X,T\right)=UTV^{2c}\left(X,T\right)+DTV^{2c}\left(X,T\right)\geq TV\left(X^{c},T\right)-2c.
\]

\item $s\in\left[T_{d,k};T_{u,k+1}\right),$ for some $k=0,1,2,...$ The
proof follows similarly as in the previous case. 
\end{itemize}
\end{proof}

Now we will prove that the construction of $X^{c}$ in Section \ref{sec:Existence}
is based on a Skorohod map on the interval $\left[-c;c\right].$ Let
us recall the definition of the Skorohod problem on the interval $\left[-c;c\right].$
Let $D[0;+\infty)$ denotes the set of real-valued càdlàg functions
and $BV^{+}[0;+\infty),$ $BV[0;+\infty)$ denote subspaces of $D[0;+\infty)$
consisting of nondecreasing functions and functions of bounded variation,
respectively. We have

\begin{defi} \label{def:Skoro}A pair of functions $\left(\phi,\eta\right)\in D[0;+\infty)\times BV[0;+\infty)$
is said to be a solution of the \emph{Skorohod problem on $\left[-c,c\right]$
for $\psi$} if the following conditions are satisfied: 
\begin{enumerate}
\item for every $t\geq0,$ $\phi^{c}\left(t\right)=\psi\left(t\right)+\eta^{c}\left(t\right)\in\left[-c,c\right];$ 
\item $\eta=\eta_{l}-\eta_{u},$ where $\eta_{l},\eta_{u}\in BV^{+}[0;+\infty)$
and the corresponding measures $\mathrm{d}\eta_{l},$ $\mathrm{d}\eta_{u}$
are carried by $\left\{ t\geq0:\phi(t)=-c\right\} $ and $\left\{ t\geq0:\phi(t)=c\right\} $
respectively. 
\end{enumerate}
\end{defi}

It is possible to prove that for every $c>0$ there exist a unique
solution to the Skorohod problem on $\left[-c;c\right]$ (cf. \cite[Theorem 2.6 and Corollary 2.4]{BKR})
and we will write $\phi^{c}=\Gamma^{c}\left(\psi\right)$ to denote
the associated map, called he \emph{Skorohod map on $\left[-c,c\right].$
}Now we will prove \begin{lem} \label{lem:skoroproblem}The process
$X^{c},$ constructed in Section \ref{sec:Existence} and the Skorohad
map on $\left[-c;c\right]$ are related via the equlity 
\[
X^{c}=X-\Gamma^{c}\left(X\right)
\]
and the mutually singular measures $\mathrm{d}UTV\left(X^{c},\cdot\right)$
and $\mathrm{d}DTV\left(X^{c},\cdot\right)$ are carried by $\left\{ t\geq0:X_{t}-X_{t}^{c}=c\right\} $
and $\left\{ t\geq0:X_{t}-X_{t}^{c}=-c\right\} $ respectively. Thus,
on these sets we have 
\[
\mathrm{d}UTV\left(X^{c},\cdot\right)=\mathrm{d}X^{c},\mbox{ }\mathrm{d}DTV\left(X^{c},\cdot\right)=\mathrm{-d}X^{c}
\]
respectively. \end{lem}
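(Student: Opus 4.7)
The plan is to invoke uniqueness of the Skorohod problem on $[-c;c]$ (\cite[Theorem 2.6]{BKR}): it suffices to verify that $\phi:=X-X^{c}$, together with a suitable $\eta\in BV$, satisfies Definition \ref{def:Skoro} for $\psi=X$, and then to identify the Jordan decomposition of $-X^{c}$ with $DTV(X^{c},\cdot)-UTV(X^{c},\cdot)$. Condition (1) of Definition \ref{def:Skoro}, namely $\phi(t)\in[-c;c]$, is already established (in fact with constant $1$) in the verification of property (3) in Section \ref{sec:Existence}.

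For condition (2), formula (\ref{eq:defXc}) shows that $X^{c}$ is constant on $[0;T_{u,0}^{2c})$, non-decreasing on each interval $[T_{u,k}^{2c};T_{d,k}^{2c})$ (being a running supremum minus $c$), and non-increasing on each interval $[T_{d,k}^{2c};T_{u,k+1}^{2c})$ (a running infimum plus $c$). Hence $X^{c}$ has locally bounded variation with $X^{c}-X_{0}=UTV(X^{c},\cdot)-DTV(X^{c},\cdot)$, so, up to an additive initial constant that can be absorbed into one of the non-decreasing components, $\eta:=-X^{c}$ is the difference of two non-decreasing c\`adl\`ag processes.

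The crucial step is locating the supports of $\mathrm{d}UTV(X^{c},\cdot)$ and $\mathrm{d}DTV(X^{c},\cdot)$. On an up-interval $[T_{u,k}^{2c};T_{d,k}^{2c})$, the process $X_{s}^{c}=\sup_{t\in[T_{u,k}^{2c};s]}X_{t}-c$ only grows at instants where $X_{s}$ attains the running supremum, and at such instants $X_{s}-X_{s}^{c}=c$; the jump at $s=T_{u,k}^{2c}$ itself satisfies $X_{T_{u,k}^{2c}}-X_{T_{u,k}^{2c}}^{c}=c$ by direct inspection, using the same case analysis already performed in the proof of property (4) in Section \ref{sec:Existence}. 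Thus $\mathrm{d}UTV(X^{c},\cdot)$ is carried by $\{X-X^{c}=c\}$, and a symmetric argument shows $\mathrm{d}DTV(X^{c},\cdot)$ is carried by $\{X-X^{c}=-c\}$. These two sets being disjoint yields mutual singularity and condition (2) of Definition \ref{def:Skoro}; uniqueness then gives $\Gamma^{c}(X)=X-X^{c}$, i.e.\ $X^{c}=X-\Gamma^{c}(X)$.

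The final identifications $\mathrm{d}UTV(X^{c},\cdot)=\mathrm{d}X^{c}$ on $\{X-X^{c}=c\}$ and $\mathrm{d}DTV(X^{c},\cdot)=-\mathrm{d}X^{c}$ on $\{X-X^{c}=-c\}$ then follow at once from the Jordan decomposition, because on the first set $\mathrm{d}DTV(X^{c},\cdot)=0$ and on the second $\mathrm{d}UTV(X^{c},\cdot)=0$. The main obstacle is the careful bookkeeping at the boundary times $T_{u,k}^{2c},T_{d,k}^{2c}$, where $X^{c}$ may carry an atom: one must confirm that at these jump instants $X-X^{c}$ reaches exactly the barrier corresponding to the sign of the jump, so that the atomic parts of the two measures are correctly supported. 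This is exactly the content of the jump-by-jump analysis from Section \ref{sec:Existence} (the arguments establishing $X_{s}^{c}-X_{s-}^{c}\geq 0$ at $s=T_{u,k}^{2c}$ and $\leq 0$ at $s=T_{d,k}^{2c}$), so no new estimate is needed.
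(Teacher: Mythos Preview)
Your proof is correct and follows essentially the same route as the paper: verify that $\phi=X-X^{c}$ lies in $[-c;c]$ (property (3)), then locate the supports of $\mathrm{d}UTV(X^{c},\cdot)$ and $\mathrm{d}DTV(X^{c},\cdot)$ by observing that on each up-interval $X^{c}$ increases only where $X$ hits a new running supremum (so $X-X^{c}=c$), handle the boundary atoms at $T_{u,k}^{2c},T_{d,k}^{2c}$ via the jump analysis already done for property (4), and invoke uniqueness of the Skorohod problem. The paper carries out the boundary-time computation a bit more explicitly, but the argument and its ingredients are the same.
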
 \begin{proof} Denote $V=X-X^{c}.$ We have
$V\in\left[-c;c\right],$ i.e. condition 1. in the Definition \ref{def:Skoro}
holds, and to finish the proof it is enough to prove that for the
finite variation process $-X^{c}$ the corresponding measures $\mathrm{d}UTV\left(-X^{c},\cdot\right)=\mathrm{d}DTV\left(X^{c},\cdot\right)$
and $\mathrm{d}DTV\left(-X^{c},\cdot\right)=\mathrm{d}UTV\left(X^{c},\cdot\right)$
are carried by $\left\{ t\geq0:V_{t}=-c\right\} $ and $\left\{ t\geq0:V_{t}=c\right\} $
respectively. Notice that by the formula (\ref{eq:defXc}) the process
$-X^{c}$ is nonincreasing on the intervals $\left[T_{u,k}^{2c};T_{d,k}^{2c}\right)$
and nondecreasing on the intervals $\left[T_{d,k}^{2c};T_{u,k+1}^{2c}\right),k=0,1,2,....$
Thus $\mathrm{d}\left(-X_{s}^{c}\right)=\mathrm{d}DTV\left(X^{c},s\right)=\mathrm{-d}\inf_{t\leq s}X_{t}$
and $\mathrm{d}\left(-X_{s}^{c}\right)=-\mathrm{d}UTV\left(X^{c},s\right)=-\mathrm{d\sup}_{t\leq s}X_{t}$
on the intervals $\left(T_{d,k}^{2c};T_{u,k+1}^{2c}\right)$ and $\left(T_{u,k}^{2c};T_{d,k}^{2c}\right),$
$k=0,1,2,...,$ respectively.

Now, notice that the only points of increase of the measure $\mathrm{d}UTV\left(X^{c},\cdot\right)$
from the intervals $\left(T_{u,k}^{2c};T_{d,k}^{2c}\right),k=0,1,2,...$
are the points where the process $X$ attains new suprema. But in
every such point $s$ we have 
\[
X_{s}^{c}=\sup_{t\in\left[T_{u,k}^{2c};s\right]}X_{t}-c=X_{s}-c
\]
and hence $V_{s}=X_{s}-X_{s}^{c}=c.$ Similar assertion holds for
$\mathrm{d}DTV\left(X^{c},\cdot\right).$

Next, notice that at the point $s=T_{u,0}$ one has $X_{s}^{c}=X_{s}-c\geq X_{0}=X_{s-},$
and since for $T_{u,k+1}^{2c}<+\infty,k=0,1,...,$ one has 
\[
T_{u,k+1}^{2c}=\inf\left\{ s\geq T_{d,k}^{2c}:X_{s}-\inf_{t\in\left[T_{d,k}^{2c};s\right]}X_{t}>2c\right\} ,
\]
then for $s=T_{u,k+1}^{2c}<+\infty,k=0,1,...,$ $\inf_{t\in\left[T_{d,k}^{2c};s\right]}X_{t}=\inf_{t\in\left[T_{d,k}^{2c};s\right)}X_{t}$
and 
\begin{eqnarray*}
X_{s}^{c} & = & X_{s}-c\geq\inf_{t\in\left[T_{d,k};s\right]}X_{t}+c\\
 & = & \inf_{t\in\left[T_{d,k}^{2c};s\right)}X_{t}+c=X_{s-}^{c}.
\end{eqnarray*}
Thus, at the points $s=T_{u,k}^{2c},k=0,1,...$ we have $\mathrm{d}DTV\left(X^{c},\cdot\right)=0,$
$\mathrm{d}UTV\left(X^{c},\cdot\right)\geq0$ and $V_{s}=c.$

In a similar way one proves that the measure $\mathrm{d}DTV\left(X^{c},\cdot\right)$
is carried by $\left\{ t\geq0:V_{t}=-c\right\} .$

The last assertion follows from the fact that $UTV$ and $DTV$ are
positive and negative parts of $\mathrm{d}X^{c}.$ \end{proof} The
direct consequence of Lemma \ref{lem:skoroproblem} is the equality
\begin{equation}
\int_{0}^{T}\left(X-X^{c}\right)\mathrm{d}X^{c}=c\cdot\int_{0}^{T}\left|\mathrm{d}X^{c}\right|=c\cdot TV\left(X^{c},T\right),\label{eq:krejci}
\end{equation}
which holds for any $c,$ $T>0.$


\begin{thebibliography}{10}
\bibitem{BL} Bednorz, W., \L{}ochowski, R., (2012) \emph{Integrability
and concentration of sample paths' truncated variation of fractional
Brownian motions, diffusions and Lévy processes.}\newblock { arXiv:1211.3870v2.}

\bibitem{Bichteler:1981} Bichteler, K., (1981) \newblock {\em
Stochastic integration and $L^{p}-$ theory of semimartingales}.
\newblock {Ann. Probab.}, 9(1):49--89.

\bibitem{BKR} Burdzy, K., Kang, W., Ramanan, K., (2009) \emph{The
Skorokhod problem in a time-dependent interval}, Stochastic Proc.
Appl., 119(2):428-452.

\bibitem{DudleyNorvaisa:2011} Richard~M. Dudley and Rimas~Norvaiša.
\newblock {\em Concrete Functional Calculus}. \newblock Springer,
New York, Dordrecht, Heildelberg, London, 2011.

\bibitem{Friz:2010fk} Peter~K. Friz and Nicolas~B. Victoir. \newblock
{\em Multidimensional stochastic processes as rough paths}, volume
120 of {\em Cambridge Studies in Advanced Mathematics}. \newblock
Cambridge University Press, Cambridge, 2010.

\bibitem{JMP1989} Jakubowski, A., Mémin, J., and Pagès, G., (1989)
{\em Convergence en loi des suites d'intérgrales stochastiques sur
l'espace $D^{1}$ de Skorokhod.} Probab. Theory Related Fields 81:111-137.

\bibitem{Kallenberg:2002} Olav Kallenberg. \newblock {\em Foundations
of Modern Probability, 2nd. ed.}. \newblock Probability and Its
Applications. Springer, New York, Berlin, Heidelberg, 2002.

\bibitem{Karandikar:1995} Karandikar, R.~L., (1995) \newblock {\em
On pathwise stochastic integration.} \newblock {Stoch. Process.
Appl.}, 57(1):11--18.

\bibitem{Kubilius:2008} Kubilius, K., (2008) \newblock {\em On
the convergence of stochastic integrals with respect to p-semimartingales.
}\newblock {Statist. Probab. Letters}, 78:2528--2535.

\bibitem{L2012QM}\L{}ochowski, R.~M., (2011) \newblock {\em Truncated
variation, upward truncated variation and downward truncated variation
of Brownian motion with drift -- Their characteristics and applications.}
\newblock { Stochastic Process. Appl.,} 121(2):378--393.

\bibitem{LM2011} \L{}ochowski, R.~M., Mi\l{}o\'{s}, P., (2013) \newblock
{\em On truncated variation, upward truncated variation and downward
truncated variation for diffusions.} \newblock { Stochastic Process.
Appl.,} 123(2):446--474.

\bibitem{Meyer:1973} P. A. Meyer., \newblock {Limites médiales,
d'après Mokobodzki.} In {\em Séminaire de Probabilités VII (1971/72)},
volume 321 of {\em Lecture Notes in Math.}, p. 198 -- 204. Springer,
Berlin, 1973.

\bibitem{Nutz:2011} Nutz., M., (2012) \newblock {\em Pathwise
Construction of Stochastic Integrals}. \newblock {Electron. Commun.
Probab.}, 17(24):1--7.

\bibitem{Protter:2004} Philip~E. Protter. \newblock {\em Stochastic
integration and differential equations, 2nd. ed.}, volume~21 of
{\em Applications of Mathematics}. \newblock Springer-Verlag,
Berlin, 2004.

\bibitem{WongZakai:1965a} Wong, E. and Zakai, M., (1965) \newblock
{\em On the convergence of ordinary integrals to stochastic integrals.}
\newblock {Ann. Math. Statist.}, 36:1560--1564. \end{thebibliography}
\end{document}